\date{}
\titleformat*{\subsection}{\fontsize{13}{15}\selectfont}
\newcommand{\F}{\mathbb{F}}
\newcommand{\Supp}{\textnormal{supp}}
\newtheorem{theorem}{Theorem}[section]
\newtheorem{corollary}[theorem]{Corollary}
\newtheorem{lemma}[theorem]{Lemma}
\theoremstyle{definition}
\newtheorem*{definition}{Definition}
\newtheorem{example}{Example}
\title{Minimal Codes From Characteristic Functions Not Satisfying The Ashikhmin-Barg Condition}
\author{Julien Sorci}
\begin{document}
\maketitle

\abstract{A \textit{minimal code} is a linear code where the only instance that a codeword has its support contained in the support of another codeword is when the codewords are scalar multiples of each other. Ashikhmin and Barg gave a sufficient condition for a code to be minimal, which led to much interest in constructing minimal codes that do not satisfy their condition. We consider a particular family of codes $\mathcal C_f$ when $f$ is the indicator function of a set of points, and prove a sufficient condition for $\mathcal C_f$ to be minimal and not satisfy Ashikhmin and Barg's condition based on certain geometric properties of the support of $f$. We give a lower bound on the size of a set of points satisfying these geometric properties and show that the bound is tight.}

\section{Introduction}

	Linear codes have found applications in areas far beyond error-correction. For example, an $(S,T)$ \textit{secret-sharing scheme} is a collection of $S$ ``shares" of a $q$-ary secret such that knowledge of any $T$ shares determines the secret, but knowledge of $T-1$ or fewer shares gives no information. Massey showed that certain codewords in the dual of a linear code, called \textit{minimal codewords}, can be used to construct a secret sharing scheme \cite{M}. However, determining the set of minimal codewords in a linear code is a hard problem in general, which galvanized the search for codes where every codeword is minimal, referred to as \textit{minimal codes}. 
		
	In 1998, Ashikhmin and Barg gave a sufficient condition for a code to be minimal based on the ratio of the maximum and minimum weight of the code \cite{AB}. 
\begin{lemma}
	A $q$-ary linear $[n,k]$ code $\mathcal C$ is minimal if 
\[ \frac{w_{\max}}{w_{\min}} < \frac{q}{q-1},    \]
	where $w_{\min}$ and $w_{\max}$ are the minimum and maximum weights of $\mathcal C$, respectively. 
\end{lemma}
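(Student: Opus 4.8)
The plan is to prove the contrapositive by a weight-counting argument over the nonzero scalars of $\F_q$. Suppose $\mathcal C$ is not minimal, so there are codewords $u, v \in \mathcal C$ with $\Supp(u) \subseteq \Supp(v)$ but with $u$ not a scalar multiple of $v$; in particular both are nonzero. Writing $w_u$ and $w_v$ for their weights, the goal is to show that this forces $w_{\max}/w_{\min} \geq q/(q-1)$, contradicting the hypothesis. The main device is to consider the entire family of codewords $v - \alpha u$ as $\alpha$ ranges over $\F_q^* = \F_q \setminus \{0\}$ and to compute the sum of their weights in two different ways.

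First I would compute $\sum_{\alpha \in \F_q^*} w(v - \alpha u)$ exactly by sorting the $n$ coordinate positions into three classes according to the supports of $u$ and $v$. On a position outside $\Supp(v)$ both $u$ and $v$ vanish, so $v - \alpha u$ is zero there for every $\alpha$ and the position contributes nothing. On a position in $\Supp(v) \setminus \Supp(u)$ the entry of $u$ is zero while that of $v$ is nonzero, so $v - \alpha u$ is nonzero there for all $q-1$ choices of $\alpha$; there are $w_v - w_u$ such positions (using $\Supp(u) \subseteq \Supp(v)$), contributing $(q-1)(w_v - w_u)$. Finally, on a position $i \in \Supp(u)$ both entries are nonzero, and $v_i - \alpha u_i = 0$ holds for exactly one scalar $\alpha = v_i / u_i \in \F_q^*$, so the position is nonzero for exactly $q-2$ values of $\alpha$; the $w_u$ such positions contribute $(q-2) w_u$. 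Adding these gives $\sum_{\alpha \in \F_q^*} w(v - \alpha u) = (q-1) w_v - w_u$.

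To finish, I would bound the same sum from below. Because $u$ is not a scalar multiple of $v$, each codeword $v - \alpha u$ is nonzero and hence has weight at least $w_{\min}$, so the sum is at least $(q-1) w_{\min}$. Combining with the exact value yields $(q-1) w_{\min} \leq (q-1) w_v - w_u$; then substituting $w_v \leq w_{\max}$ and $w_u \geq w_{\min}$ gives $(q-1) w_{\min} \leq (q-1) w_{\max} - w_{\min}$, which rearranges to $q\, w_{\min} \leq (q-1) w_{\max}$, i.e.\ $w_{\max}/w_{\min} \geq q/(q-1)$. This contradicts the hypothesis and completes the argument.

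The crux — and the step most prone to error — is the coordinate bookkeeping, specifically recognizing that on each position of $\Supp(u)$ precisely one nonzero scalar annihilates the entry of $v - \alpha u$, so that these positions contribute $q-2$ rather than $q-1$. Everything else is routine once the identity $\sum_{\alpha \in \F_q^*} w(v - \alpha u) = (q-1) w_v - w_u$ is in hand; the role of the hypothesis is simply to turn the resulting inequality into a contradiction.
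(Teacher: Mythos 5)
Your proof is correct. Note, however, that the paper does not prove this lemma at all: it is quoted as a known result from Ashikhmin and Barg \cite{AB}, so there is no in-paper argument to compare against. Your contrapositive argument --- computing $\sum_{\alpha \in \F_q^\times} w(v-\alpha u) = (q-1)w_v - w_u$ by classifying coordinates according to $\Supp(u) \subseteq \Supp(v)$, then bounding the sum below by $(q-1)w_{\min}$ since no $v-\alpha u$ vanishes --- is essentially the standard (indeed the original) proof of this result, and every step checks out: the key observation that each position of $\Supp(u)$ is annihilated by exactly one nonzero scalar $\alpha = v_i/u_i$ is right, and the hypothesis that $u$ is not a scalar multiple of $v$ is used exactly where needed, both to guarantee $v - \alpha u \neq 0$ and (implicitly, via the usual convention that minimality concerns nonzero codewords) to ensure $u \neq 0$ so that $w_u \geq w_{\min}$.
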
 
	For many years following this result all known examples of minimal codes satisfied Ashikhmin and Barg's condition, until 2017 when Chang and Hyun \cite{CH} constructed a minimal binary code from a simplicial complex that did not. As for finding a family of $q$-ary minimal codes not satisfying Ashikhmin and Barg's condition for $q$ an odd prime power, a particular code that has received much attention is the code $\mathcal C_f$, which we will now define. Let $f: \F_q^n \rightarrow \F_q$ be an arbitrary but fixed function. We define $\mathcal C_f$ to be the $\F_q$-subspace of $\F_q^{q^n-1}$ spanned by vectors of the form
\begin{equation*}
	c(u,v):= (uf(x)+v \cdot x)_{x \in \F_q^n \setminus \{0\}}
\end{equation*}
	where $u \in \F_q$, $v \in \F_q^n$, and $v\cdot x$ is the usual dot product. We record below the basic parameters of the code, which are well known. 
\begin{lemma}
	If $f:\F_q^n \rightarrow \F_q$ is not linear and $f(a) \neq 0$ for some $a \in \F_q^n \setminus \{0 \}$, then $\mathcal C_f$ is a $[q^n-1, n+1]$ linear code. 
\end{lemma}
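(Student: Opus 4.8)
The plan is to realize $\mathcal C_f$ as the image of an explicit linear map and compute its rank via the kernel. The length is immediate: the index set $\F_q^n\setminus\{0\}$ has exactly $q^n-1$ elements, so every $c(u,v)$ lies in $\F_q^{q^n-1}$ and the length is $q^n-1$. For the dimension I would introduce the map
\[ \Phi:\F_q\times\F_q^n\longrightarrow\F_q^{q^n-1},\qquad \Phi(u,v)=c(u,v), \]
and check directly from $c(u,v)=(uf(x)+v\cdot x)_{x}$ that $\Phi$ is $\F_q$-linear in $(u,v)$, since in each coordinate both $u\mapsto uf(x)$ and $v\mapsto v\cdot x$ are linear. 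As $\mathcal C_f$ is by definition the $\F_q$-span of all the $c(u,v)$, and these are exactly the elements of the image of $\Phi$ (already a subspace), $\mathcal C_f$ equals the image of $\Phi$. By rank–nullity $\dim\mathcal C_f=(n+1)-\dim\ker\Phi$, so it suffices to prove $\Phi$ is injective.

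The heart of the argument is injectivity, which I would handle by splitting on the first coordinate of a hypothetical kernel element $(u,v)$ satisfying $uf(x)+v\cdot x=0$ for all $x\neq0$. If $u=0$, then $v\cdot x=0$ for every nonzero $x$; testing against the standard basis vectors $e_1,\dots,e_n$ forces every coordinate of $v$ to vanish, so $(u,v)=(0,0)$, and this case uses neither hypothesis. If instead $u\neq0$, dividing through and setting $w=-u^{-1}v$ gives $f(x)=w\cdot x$ for all $x\in\F_q^n\setminus\{0\}$; that is, $f$ coincides with a linear form on every nonzero point.

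It remains to exclude this last possibility using the hypotheses, and this is where I expect the only genuine subtlety. If $w=0$ then $f$ vanishes on all nonzero points, contradicting the assumption that $f(a)\neq0$ for some $a\in\F_q^n\setminus\{0\}$; if $w\neq0$ then $f$ agrees with the nonzero linear form $x\mapsto w\cdot x$ on the whole punctured space, contradicting non-linearity. The delicate point is that the value $f(0)$ never enters the code, so ``$f$ is not linear'' must be read as the statement that $f$ does not coincide with a linear form on the points that index the code (equivalently, after the harmless normalization $f(0)=0$ enjoyed by indicator functions of point sets avoiding the origin). Either way we reach a contradiction, so $\ker\Phi=\{0\}$ and $\dim\mathcal C_f=n+1$. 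The main obstacle is precisely isolating which degenerate $f$ collapse the rank and seeing that the two hypotheses—non-linearity and non-vanishing at some nonzero point—are tailored to kill, respectively, the nonzero-form and zero-form degeneracies.
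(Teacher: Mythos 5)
Your proof is correct. Note that the paper itself gives no proof of this lemma---it is recorded as ``well known''---so there is nothing to compare against; the rank--nullity argument via the linear map $\Phi(u,v)=c(u,v)$ is the standard route, and your case split on $u=0$ versus $u\neq 0$ in the kernel computation is exactly how one would expect it to go. The one subtlety you flag is genuine: as literally stated the lemma fails for a function that agrees with a linear form on $\F_q^n\setminus\{0\}$ but takes a nonzero value at the origin (such an $f$ is not linear, yet the corresponding $(u,v)$ lies in $\ker\Phi$). This is harmless in the paper's context, where $f$ is always the indicator function of a set $S\subseteq\F_q^n\setminus\{0\}$ and hence $f(0)=0$, but you were right to isolate it, and your reading of ``not linear'' as ``does not coincide with a linear form on the index set'' is the correct repair for the general statement.
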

	In \cite{DHZ}, Ding et. al. gave necessary and sufficient conditions for the code $\mathcal C_f$ to be minimal when $q=2$ based on the Walsh-Hadamard transform of $f$, which is defined to be the function $\hat{f}(x):= \sum_{v \in \F_2^n}(-1)^{f(x)+v\cdot x}$.
\begin{lemma}\label{dingminimal}
	If $f:\F_2^n \rightarrow \F_2$ is not linear and $f(a) \neq 0$ for some $a \in \F_2^n \setminus \{0 \}$ then the binary code $\mathcal C_f$ is minimal if and only if $\hat{f}(x) + \hat{f}(y) \neq 2^n$ and $\hat{f}(x)-\hat{f}(y) \neq 2^n$ for every pair of distinct vectors $x, y \in \F_2^n$. 
\end{lemma}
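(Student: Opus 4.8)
The plan is to convert the combinatorial notion of support containment between two codewords into a single identity among Walsh coefficients, and then to run through the few possible shapes a pair of codewords can take. First I would fix notation: since $q=2$, ``scalar multiple'' means ``equal,'' so $\mathcal C_f$ fails to be minimal precisely when there exist two distinct nonzero codewords $c(u_1,v_1)$ and $c(u_2,v_2)$ with $\Supp(c(u_1,v_1)) \subseteq \Supp(c(u_2,v_2))$. Writing $g_i(x)=u_i f(x)+v_i\cdot x$, containment over the index set $x\in\F_2^n\setminus\{0\}$ is equivalent to $N_{10}=0$, where $N_{10}=\#\{x\neq 0: g_1(x)=1,\ g_2(x)=0\}$. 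The engine of the proof is the $\pm1$ indicator identity $4\cdot\mathbf{1}[g_1=1,g_2=0]=(1-(-1)^{g_1})(1+(-1)^{g_2})$, which upon summing over all $x\in\F_2^n$ yields $4\tilde N_{10}=2^n+W_2-W_1-W_{12}$, where $W_1=\sum_x(-1)^{g_1}$, $W_2=\sum_x(-1)^{g_2}$, $W_{12}=\sum_x(-1)^{g_1+g_2}$, and $\tilde N_{10}$ is the count over all $x$. Under the standing normalization $f(0)=0$ one checks that $x=0$ never lands in the $(g_1,g_2)=(1,0)$ class in any of the cases below, so $N_{10}=\tilde N_{10}$, and the containment condition becomes $W_1+W_{12}-W_2=2^n$.

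The three correlation sums evaluate immediately from the definition of the Walsh transform: if $u_i=0$ then $W_i=2^n\mathbf{1}[v_i=0]$ and if $u_i=1$ then $W_i=\hat f(v_i)$; likewise $W_{12}=2^n\mathbf{1}[v_1=v_2]$ when $u_1=u_2$ and $W_{12}=\hat f(v_1+v_2)$ when $u_1\neq u_2$. Substituting into $W_1+W_{12}-W_2=2^n$ gives, case by case: (i) when $u_1=u_2=0$ it reduces to $v_1=v_2$, so no containment occurs between distinct linear codewords (this is the minimality of the simplex code); (ii) when $u_1=u_2=1$ it becomes $\hat f(v_1)-\hat f(v_2)=2^n$; (iii) when $u_1=0,\ u_2=1$ it becomes $\hat f(v_1+v_2)-\hat f(v_2)=2^n$; and (iv) when $u_1=1,\ u_2=0$ it becomes $\hat f(v_1)+\hat f(v_1+v_2)=2^n$. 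Reparametrizing by $x,y$ — e.g.\ $x=v_1+v_2,\ y=v_2$ in (iii) and $x=v_1,\ y=v_1+v_2$ in (iv), and using $v_1\neq 0$ or $v_2\neq 0$ to force $x\neq y$ — cases (ii) and (iii) together say that a bad containment of difference-type exists iff $\hat f(x)-\hat f(y)=2^n$ for some distinct $x,y$, while case (iv) says a bad containment of sum-type exists iff $\hat f(x)+\hat f(y)=2^n$ for some distinct $x,y$.

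Assembling the cases, $\mathcal C_f$ fails to be minimal iff there is a distinct pair $x,y$ with $\hat f(x)-\hat f(y)=2^n$ or $\hat f(x)+\hat f(y)=2^n$, which is exactly the negation of the stated condition; this gives the equivalence. Along the way I would record that every $c(1,v)$ is nonzero and distinct from every $c(0,v')$: otherwise $f(x)+v\cdot x=v'\cdot x$ for all $x$, forcing $f$ linear (equivalently $\hat f(v)=2^n$), contrary to hypothesis. This guarantees the codewords being compared in (ii)--(iv) are genuinely distinct and nonzero, so that each Walsh equation truly certifies a failure of minimality and conversely.

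I expect the main obstacle to be bookkeeping rather than conceptual. First, one must verify that the reparametrizations $(v_1,v_2)\mapsto(x,y)$ are bijections onto the set of ordered distinct pairs, so that no bad containment is missed in the forward direction and each Walsh equation is realized by an actual containment in the converse direction. Second, and more delicate, one must justify $N_{10}=\tilde N_{10}$ by tracking the point $x=0$ through all four cases; this is precisely where the normalization $f(0)=0$ is used, and getting the $x=0$ casework right in each case is the only step where an error would silently shift the threshold away from $2^n$.
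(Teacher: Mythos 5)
Your argument is correct. The paper does not actually prove this lemma --- it is quoted from the reference [DHZ] --- and your derivation is essentially the standard one used there: the identity $4\tilde N_{10}=2^n+W_2-W_1-W_{12}$ is just the weight relation $\mathrm{wt}(c_1)+\mathrm{wt}(c_1+c_2)-\mathrm{wt}(c_2)=2N_{10}$ in disguise, and the case analysis over $(u_1,u_2)$ matches the computation of the weight distribution of $\mathcal C_f$ via Walsh coefficients in that source. One genuinely valuable point you raise is the normalization $f(0)=0$: it is needed (otherwise the $x=0$ coordinate shifts the sum-type threshold in the case $u_1=1$, $u_2=0$), it is assumed in [DHZ], it holds automatically in this paper's application where $f$ is the indicator of a set $S\subseteq\F_q^n\setminus\{0\}$, but it is silently omitted from the lemma as stated here, so your explicit tracking of the point $x=0$ is a correct and worthwhile addition rather than a defect.
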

	A boolean function $f: \F_2^n \rightarrow \F_2$ with $n$ a positive even integer is said to be \textit{bent} if $|\hat{f}(x)|=2^{n/2}$ for all $x \in \F_2^n$. Using Lemma~\ref{dingminimal}, it is easy to see that the binary code $\mathcal C_f$ is minimal when $f$ is a bent function. 
	
	Bonini et. al. studied the code $\mathcal C_f$ for arbitrary prime powers $q$ and functions $f$, and showed that if the zero set of $f$ satisfies certain geometric properties then $\mathcal C_f$ is a minimal code \cite{BB}. We continue this line of work by considering the code $\mathcal C_f$ when $f$ is the indicator function of a set, and give sufficient conditions for $\mathcal C_f$ to be minimal and not satisfying the condition of Ashikhmin and Barg in terms of the geometric properties of the support of $f$. We give a tight lower bound on the size of sets satisfying our geometric conditions, and give an explicit example of a set meeting the lower bound. In section 2 we lay out the notation used, and in section 3 we present the main results.

\section{Notation}

\begin{definition}
	
	A \textit{linear} $[n,k]$ \textit{code} is a $k$-dimensional subspace $\mathcal C$ of $\F_q^n$.	
\end{definition}

\begin{definition}
	
	A codeword $c \in \mathcal C$ is said to be \textit{minimal} if whenever $\Supp(c) \subseteq \Supp(c^\prime)$ for some codeword $c^\prime \in \mathcal C$ it implies that $c = \lambda c^\prime$ for some $\lambda \in \F_q^\times$. A linear code $\mathcal C$ is \textit{minimal} if all codewords of $\mathcal C$ are minimal. 
	
\end{definition}

We summarize some of the notation used in the paper:

\begin{itemize}
	\item $e_i$ will denote the $i^{th}$ standard basis vector. 
	\item For $v \in \F_q^n$, we will let $H(v)$ denote the set $\{u \in \F_q^n : v \cdot u =0 \}$.
	\item For a function $f : \F_q^n \rightarrow \F_q$, we will let $V(f)$ denote the set of zeros of $f$, $\{ u \in \F_q^n : f(u)=0 \}$.
	\item If $U$ is any set, we will let $U^*:=U \setminus \{0 \}$, and $\overline{U}$ will denote the complement of $U$. 
	\item By a \textit{hyperplane}, we will mean an $(n-1)$-dimensional subspace of $\F_q^n$.
	\item By an \textit{affine hyperplane}, we will mean a coset of an $(n-1)$-dimensional subspace of $\F_q^n$. Note that by our convention, a hyperplane is also an affine hyperplane.   
\end{itemize}

\section{The Main Results}

The next theorem is our main result.

\begin{theorem}\label{main}
	
	Let $q$ be an arbitrary prime power, and let $S \subseteq \F_q^n \setminus \{0\}$ be a set of points such that

\begin{enumerate}
	\item $S$ is not contained in any affine hyperplane,
	\item $S$ meets every affine hyperplane, 
	\item $|S| < q^{n-2}(q-1)$
\end{enumerate}
	
	Then $\mathcal C_f$ with $f$ the indicator function of $S$ is a minimal code that does not satisfy the Ashikhmin-Barg condition.
	
\end{theorem}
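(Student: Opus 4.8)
My plan is to work directly with the supports of the spanning codewords $c(u,v)$, equivalently with their zero sets $Z(u,v) := \{x \in \F_q^n \setminus \{0\} : uf(x)+v\cdot x = 0\}$, and to split every argument according to whether $v = 0$ or $v \neq 0$. The starting observation is that for $v \neq 0$ the zero set decomposes along two parallel affine hyperplanes: a point $x \notin S$ is a zero exactly when $x \in H(v)$, while a point $x \in S$ is a zero exactly when $v\cdot x = -u$. Counting both contributions yields the weight formula $w(c(u,v)) = q^{n-1}(q-1) + |H(v)\cap S| - |\{x \in S : v\cdot x = -u\}|$ for $v \neq 0$, whereas $c(u,0)$ with $u\neq 0$ has support exactly $S$ and weight $|S|$. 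I would record these computations first, since everything downstream rests on them.

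For the Ashikhmin--Barg part I would show $w_{\min} = |S|$ and $w_{\max} \geq q^{n-1}(q-1)$. The codeword $c(u,0)$ realises weight $|S|$; for any $v \neq 0$ the weight formula together with $|\{x \in S : v\cdot x = -u\}| \leq |S|$ gives $w(c(u,v)) \geq q^{n-1}(q-1)-|S|$, and hypothesis (3) forces this to exceed $|S|$ because $q^{n-1}(q-1) \geq 2q^{n-2}(q-1) > 2|S|$. Hence $w_{\min} = |S|$, while $c(0,v)$ for $v \neq 0$ contributes weight $q^{n-1}(q-1)$ to $w_{\max}$. Therefore $w_{\max}/w_{\min} \geq q^{n-1}(q-1)/|S| > q \geq q/(q-1)$ by (3), so the code fails the Ashikhmin--Barg condition for every prime power $q$.

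For minimality I would prove that $\Supp(c(u,v)) \subseteq \Supp(c(u',v'))$, equivalently $Z(u',v') \subseteq Z(u,v)$, forces $(u,v) = \lambda(u',v')$. The case $v = 0$ is quick: the support is $S$, and by hypothesis (2) the affine hyperplane $\{x : v'\cdot x = -u'\}$ meets $S$ whenever $v' \neq 0$, so $Z(u',v')$ always intersects $S$ and $S \not\subseteq \Supp(c(u',v'))$; thus $v' = 0$ and the claim follows. When $v \neq 0$, restricting the inclusion to points outside $S$ gives $H(v')\cap\overline{S} \subseteq H(v)$, and the heart of the argument is a counting step: two distinct hyperplanes through the origin meet in only $q^{n-2}$ points, yet $|H(v')\cap\overline{S}| \geq q^{n-1}-|S| > q^{n-2}$ by (3), forcing $H(v) = H(v')$ and hence $v' = \mu v$ for some $\mu \in \F_q^\times$. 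A final application of hypothesis (2) to the $S$-parts of the two zero sets, which now live on the parallel affine hyperplanes $\{v\cdot x = -u\}$ and $\{v\cdot x = -u'/\mu\}$, shows these must coincide, giving $u' = \mu u$ and the desired scalar relation.

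I expect the main obstacle to be this $v \neq 0$ minimality step, and in particular pinning down which hypothesis each implication needs: hypothesis (3) drives the hyperplane-coincidence count, hypothesis (2) closes both the $v=0$ case and the matching of the affine levels $-u$ and $-u'$, and hypothesis (1) is what guarantees $f$ is non-linear, so that the stated parameters of $\mathcal C_f$ (in particular $\dim \mathcal C_f = n+1$) hold and the parametrisation $(u,v)\mapsto c(u,v)$ is injective. Some care will also be needed with the zero codeword and with the $u=0$ sub-cases, where the two parallel hyperplanes degenerate into a single hyperplane through the origin.
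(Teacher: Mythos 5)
Your proposal is correct and follows essentially the same route as the paper: the same decomposition of the zero set of $c(u,v)$ into $H(v)\cap\overline{S}$ and $S\cap\{v\cdot x=-u\}$, the same counting argument using $|S|<q^{n-2}(q-1)$ to force $H(v)=H(v')$, the same use of condition (2) to match the affine levels and obtain $u'=\mu u$, and the same weight bounds for the Ashikhmin--Barg part. Your organization is in fact slightly cleaner than the paper's five-case analysis, since your hyperplane-coincidence count and level-matching step handle the $u=0$ and $u'=0$ sub-cases uniformly (and your weight $q^{n-1}(q-1)$ for $c(0,v)$ is the correct value).
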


\begin{proof}
	
	Suppose that $\Supp( c(u^\prime,v^\prime)) \subseteq \Supp(c(u,v))$ for some codewords $c(u,v), c(u^\prime, v^\prime)$ of $\mathcal C_f$. Equivalently, 
\begin{equation}\label{assump}
	V(uf(x)+v \cdot x)^* \subseteq V(u^\prime f(x) + v^\prime \cdot x)^*
\end{equation}
	 We proceed by cases to show that $c(u,v)=\lambda c(u^\prime,v^\prime)$ for some $\lambda \in \F_q^\times$.  

\textbf{Case 1:} If $v=0$, then it implies $V(f)^* \subseteq H(v^\prime)^*$, so that $\overline{H(v^\prime)^*} \subseteq S$, contradicting that $|S| < q^{n-2}(q-1)$. 

\textbf{Case 2:} If $v^\prime =0$ then $V(uf(x)+v\cdot x)^* \subseteq V(f)^*$. From the partition
\begin{equation}\label{partition}
	V(uf(x) + v\cdot x)^* = (V(f)^* \cap H(v)^*) \cup ( S \cap \{v \cdot x = -u \})
\end{equation}
 it implies that $S$ does not meet the affine hyperplane $\{v \cdot x =-u \}$, a contradiction. 

\textbf{Case 3:} If $v, v^\prime \neq 0$, then from equation~\ref{assump} and the partition of equation~\ref{partition} we have
\begin{equation}
	V(f)^* \cap H(v)^* \subseteq V(f)^* \cap H(v^\prime)^* \subseteq H(v^\prime)^*
\end{equation}
	Here $|H(v)^* \cap V(f)^*| = |H(v)^* \setminus S| \geq q^{n-1} - |S| \geq q^{n-2}+1$, so that $H(v)^* \cap V(f)^*$ is not contained in a hyperplane other than $H(v)$, i.e. $H(v)=H(v^\prime)$. Thus we have $v^\prime = \lambda v$ for some $\lambda \in \F_q^\times$. Since $S$ meets every affine hyperplane, we can choose some $y$ in $S \cap \{v\cdot x = -u \}$. Equation~\ref{assump} then implies $u=-v\cdot y$ and $u^\prime=-\lambda v \cdot y$, so that $u^\prime = \lambda u$. We therefore conclude $c(u^\prime, v^\prime) = \lambda c(u,v)$ in this case, as was required.  
	
\textbf{Case 4:} If $u=0$ and $H(v) \neq H(v^\prime)$, then equation~\ref{assump} reads as $H(v)^* \subseteq V(u^\prime f(x)+v^\prime \cdot x)^*$. Using the partition of equation~\ref{partition} and the assumption that $S$ is not contained in any affine hyperplane, we have
\begin{equation*}
	\begin{split}
		q^{n-1}-1 &= |H(v)^*| \\
		&= |V(f)^* \cap H(v^\prime)^* \cap H(v)^*| + |S \cap \{ v^\prime \cdot x =-u^\prime \} \cap H(v)^* | \\
		& \leq |H(v)^* \cap H(v^\prime)^*| + |S \cap \{ v^\prime \cdot x =-u^\prime \} | \\
		& \leq q^{n-2}-1 + q^{n-2}(q-1) - 1 \\
		&= q^{n-1} - 2\\
	\end{split}
\end{equation*}
This clear contradiction means we therefore must have $H(v)=H(v^\prime)$, so that $v^\prime = \lambda v$ for some $\lambda \in \F_q^\times$. But the containment $H(v)^* \subseteq V(u^\prime f(x) + \lambda v \cdot x)^*$ implies that $H(v)^* \subseteq V(f)^*$, or equivalently $S \subseteq \overline{H(v)^*}$. This contradicts that $S$ meets the hyperplane $H(v)$. 

\textbf{Case 5:} If $u^\prime =0$, then we have $V(uf(x)+v\cdot x)^* \subseteq H(v^\prime)^*$, which follows by case 3. 
	
	Lastly we check that the code $\mathcal C_f$ does not satisfy the Ashikhmin-Barg condition. The maximum weight is at least the weight of $c(0,1)$, which is the number of points in $H(v)$, and the minimum weight is at most the weight of $c(1,0)$, which is $|S|$. Thus:
\begin{equation*}
	\frac{w_{\max}}{w_{\min}} \geq \frac{q^{n-1}}{|S|} >  \frac{q}{q-1} 
\end{equation*}

\end{proof}

When $q=2$ the conditions of Theorem~\ref{main} simplify considerably, which we record in the following corollary.

\begin{corollary}\label{mainbinary}

	Let $S \subseteq \F_2^n \setminus \{0 \}$ be a set of points such that
\begin{enumerate}
	\item $S$ is not contained in any hyperplane,
	\item $S$ meets every hyperplane,
	\item $|S| < 2^{n-2}$
\end{enumerate}
Then the binary code $C_f$ with $f$ the indicator function of $S$ is a minimal code not satisfying the Ashikhmin-Barg condition.

\end{corollary}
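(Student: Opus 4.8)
The plan is to derive Corollary~\ref{mainbinary} as the specialization $q=2$ of Theorem~\ref{main}, verifying that the three hypotheses of the corollary imply the corresponding hypotheses of the theorem in this case. The key observation is that when $q=2$ the scalar field $\F_2^\times = \{1\}$ is trivial, so ``affine hyperplane'' and ``hyperplane'' collapse: every $(n-1)$-dimensional subspace has exactly two cosets (itself and its complement), and more importantly the condition $|S| < q^{n-2}(q-1)$ becomes $|S| < 2^{n-2}$, which is exactly condition (3) of the corollary.

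First I would note that conditions (1) and (2) of the corollary are literally conditions (1) and (2) of the theorem once we recall that in the $q=2$ setting every affine hyperplane is a coset of an $(n-1)$-subspace, and the corollary's phrasing in terms of ``hyperplane'' is justified because over $\F_2$ there are only two affine hyperplanes parallel to each $H(v)$, namely $H(v)$ and its complement. The subtlety to check is that ``$S$ meets every hyperplane'' together with ``$S$ is not contained in any hyperplane'' should be shown equivalent to ``$S$ meets every affine hyperplane'' and ``$S$ is not contained in any affine hyperplane''; since an affine hyperplane is either a hyperplane through the origin or its complementary coset, and $S \subseteq \F_2^n \setminus \{0\}$, I would verify that meeting all linear hyperplanes plus meeting all complementary cosets both follow from the two stated conditions.

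The main step is then to substitute $q=2$ into condition (3) of the theorem: $q^{n-2}(q-1) = 2^{n-2}\cdot 1 = 2^{n-2}$, so $|S| < q^{n-2}(q-1)$ reads precisely $|S| < 2^{n-2}$, matching the corollary. With all three hypotheses of Theorem~\ref{main} verified, the conclusion that $\mathcal C_f$ is minimal and fails the Ashikhmin-Barg condition follows immediately by invoking the theorem.

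The step I expect to require the most care is the bookkeeping around the affine-versus-linear hyperplane distinction: I must make sure that the corollary's two geometric conditions, stated only for hyperplanes through the origin, genuinely capture the theorem's conditions, which are stated for all affine hyperplanes. The cleanest way is to argue that over $\F_2$ any affine hyperplane not through the origin is the complement of a hyperplane through the origin, and that the hypothesis ``$S$ meets every hyperplane'' combined with ``$S$ is not contained in any hyperplane'' is exactly the statement that for every hyperplane $H$ both $S \cap H$ and $S \cap \overline{H}$ are nonempty, which is the full affine condition. Once this equivalence is pinned down the corollary is an immediate consequence of the theorem.
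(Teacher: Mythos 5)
Your proposal is correct and follows essentially the same route as the paper: both reduce to Theorem~\ref{main} by observing that over $\F_2$ each hyperplane $H$ has exactly one other coset, namely $\overline{H}$, so that ``$S$ meets every hyperplane'' and ``$S$ is not contained in any hyperplane'' together are equivalent to the two affine conditions of the theorem, while condition (3) matches upon substituting $q=2$. The paper spells out the two complementation arguments (non-containment in an affine hyperplane, and meeting every affine hyperplane) exactly as you sketch them.
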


\begin{proof}
	
	It suffices to check that when $q=2$, and $S$ is a set of points that is not contained in a hyperplane and meets every hyperplane, then $S$ is not contained in an affine hyperplane, and meets every affine hyperplane. 
	
	To see that $S$ is not contained in an affine hyperplane, suppose that $H$ is an affine hyperplane not containing the origin, and that $S \subseteq H$. Since $q=2$, then $\overline{H}$ is a hyperplane, so that $S$ does not meet the hyperplane $\overline{H}$, a contradiction.
	
	Similarly, if $S$ does not meet the affine hyperplane $H$ not containing the origin, then $S$ is contained in $\overline{H}$, which is a hyperplane. Therefore $S$ meets every affine hyperplane, so that $S$ satisfies the conditions of Theorem~\ref{main}. 
	
\end{proof}

\begin{example}
	
	Assume that $n \geq 6$ is an even positive integer, and let $q=2$. A \textit{partial spread} of order $s$ is a set of $n/2$-dimensional subspaces $\{U_1,...,U_s\}$ of $\F_2^n$ such that $U_i \cap U_j = \{ 0 \}$ for all $1 \leq i, j \leq s$. It is easy to see that a partial spread of order $s$ has at most $2^{n/2}+1$ elements. 
	
	In \cite{DHZ}, Ding et. al. showed that when $1 \leq s \leq 2^{n/2}+1$, $s \notin \{1, 2^{n/2}, 2^{n/2}+1 \}$, then $\mathcal C_f$ with $f$ the indicator function of the set $S = \cup_{i=1}^s U_i^*$ is a minimal code. Moreover, they showed that if, in addition, we have $s \leq 2^{\frac{n}{2}-2}$ then $\mathcal C_f$ does not satisfy Ashikhmin and Barg's condition. They proved this by computing the Walsh-Hadamard transform of $f$ and then applying Lemma~\ref{dingminimal}, but we can alternatively check that the set $S$ satisfies the conditions of Corollary~\ref{mainbinary}. 
	
	Since $s \geq 2$, then $S$ clearly spans $\F_2^n$, and the assumption that $n \geq 6$ means that $\dim(U_i) \geq 3$, so $S$ meets every hyperplane. Finally, we have in general that $|S|=s(2^{n/2}-1)$, so if we assume that $s \leq 2^{\frac{n}{2}-2}$ then an easy computation shows that $|S| \leq 2^{n-2}-2^{\frac{n}{2}-2}< 2^{n-2}$. Therefore $S$ indeed satisfies the conditions of Corollary~\ref{mainbinary}. 
		
\end{example}

\begin{example}
	
	Let $n \geq 7$ and $2 \leq k \leq \lfloor \frac{n-3}{2} \rfloor$. Let $S$ be the set of vectors of $\F_2^n$ with weight at most $k$. In \cite{DHZ}, Ding et. al. showed that $\mathcal C_f$ with $f$ the indicator function of $S$ is a minimal $[2^n-1, n+1, \sum_{i=1}^k {n \choose i}]$ binary code, and moreover that $\mathcal C_f$ does not satisfy the Ashikhmin-Barg condition if and only if 
\begin{equation}\label{dingbd}
	1 + 2 \sum_{i=1}^k {n \choose i} \leq 2^{n-1} + {n-1 \choose k}
\end{equation}
	
	We alternatively check that the set $S$ satisfies the conditions of Corollary~\ref{mainbinary}. Since $S$ contains the standard basis vectors, $S$ is clearly not contained in any hyperplane. Given any hyperplane $H(v)$, at least one of the vectors $e_1$, $e_2$, or $e_1+e_2$ is an element of $H(v)$, and each of these vectors is also an element of $S$. Therefore $S$ also meets every hyperplane. In general the size of $S$ is $\sum_{i=1}^k {n \choose k}$, so to apply Corollary~\ref{mainbinary} we lastly need to impose the restriction that $\sum_{i=1}^k {n \choose k} < 2^{n-2}$. We note that this is equivalent to the inequality
\begin{equation}
	1+ 2 \sum_{i=1}^k {n \choose k} \leq 2^{n-1}
\end{equation}	 
	 which is a more restrictive condition than the inequality given in Equation~\ref{dingbd}.   
	
\end{example}

	We lastly give a tight lower bound on the size of a set of points satisfying the conditions of Theorem~\ref{main}. The following lemma was first proved by Jameson \cite{J}. There are many known proofs of the result; for a survey on them we refer the reader to \cite{B}.

\begin{lemma}\label{bound}
	
	If $S$ is a set of points in $\F_q^n$ meeting every affine hyperplane then $|S| \geq n(q-1)+1$. 
	
\end{lemma}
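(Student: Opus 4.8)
The plan is to prove the statement directly by the polynomial method over $\F_q^{n+1}$, translating the geometric hypothesis into the vanishing of a single polynomial and reading off the bound from the degree of its reduced representation. First I would lift each point $s \in S$ to $\tilde{s} = (s,1) \in \F_q^{n+1}$ and consider the polynomial $P(y) = \prod_{s \in S}(\tilde{s}\cdot y)$ in the variables $y = (y_1,\dots,y_{n+1})$, writing $y' = (y_1,\dots,y_n)$. Since $P$ is a product of $|S|$ linear forms, $\deg P \le |S|$, so the goal becomes showing $\deg P \ge n(q-1)+1$.

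The key dictionary is that an affine hyperplane $\{x : v\cdot x = c\}$ with $v \ne 0$ corresponds to the dual vector $y = (v,-c)$, for which $\tilde{s}\cdot y = v\cdot s - c$. Thus $S$ misses this hyperplane exactly when $\tilde{s}\cdot y \ne 0$ for every $s$, i.e. when $P(y) \ne 0$. Because the offset $c$ is unconstrained, the hypothesis that $S$ meets every affine hyperplane says precisely that $P(y) = 0$ for every $y$ whose first $n$ coordinates are not all zero. On the remaining line $y' = 0$ one computes $P(0,\dots,0,y_{n+1}) = y_{n+1}^{|S|}$, since every factor collapses to $y_{n+1}$ there. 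Putting these together, $P$ represents, as a function $\F_q^{n+1}\to\F_q$, the explicit function $\prod_{i=1}^n (1-y_i^{q-1})\cdot y_{n+1}^{|S|}$: the indicator product kills every $y$ with $y' \ne 0$, and on $y' = 0$ it reduces to $y_{n+1}^{|S|}$.

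I would then invoke the standard fact that every function on $\F_q^{n+1}$ has a unique representative with all variable-degrees at most $q-1$ (its reduced form), obtained from any representing polynomial by the degree-non-increasing reductions $y_i^{q}\mapsto y_i$. Reducing $y_{n+1}^{|S|}$ as a univariate function gives $y_{n+1}^{r}$ with $1 \le r \le q-1$ and $r \equiv |S| \pmod{q-1}$, so the reduced form of the function above is $\prod_{i=1}^n(1-y_i^{q-1})\cdot y_{n+1}^{r}$, of total degree $n(q-1)+r$. Since reduction cannot increase degree, $n(q-1)+r \le \deg P \le |S|$, and as $r \ge 1$ this yields $|S| \ge n(q-1)+1$.

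The main obstacle is setting up the duality so that the hypothesis captures all of $\F_q^{n+1}$ off the span of $e_{n+1}$ rather than merely an affine slice of it: it is exactly the freedom in the offset $c$ that forces this stronger covering condition, and overlooking it loses the crucial $+1$. The only other point needing care is the reduced representation of $y_{n+1}^{|S|}$ in the boundary case $|S| \equiv 0 \pmod{q-1}$, where one takes $r = q-1$ rather than $r = 0$; the rest of the computation is routine.
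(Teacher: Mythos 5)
Your argument is correct and complete, but note that the paper itself offers no proof of Lemma~\ref{bound} to compare against: it attributes the result to Jamison \cite{J} and defers to Ball's survey \cite{B} for the known proofs. What you have written is essentially the Brouwer--Schrijver polynomial-method proof (one of the arguments collected in that survey), presented in a homogenized form: lifting each $s\in S$ to $\tilde s=(s,1)\in\F_q^{n+1}$ encodes every affine hyperplane $\{x: v\cdot x=c\}$, including the linear ones with $c=0$, as a dual vector $(v,-c)$, so that $P(y)=\prod_{s\in S}(\tilde s\cdot y)$ vanishes everywhere off the line $y'=0$ and restricts to $y_{n+1}^{|S|}$ on it. The more common presentation instead translates $S$ so that it contains the origin and works with $\prod_{s\in S\setminus\{0\}}(1-s\cdot y)$, which represents the indicator function of the origin and hence has degree at least $n(q-1)$; your homogenization trades that normalization for one extra variable and recovers the crucial $+1$ from the residue $r\ge 1$ of $|S|$ modulo $q-1$. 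Every step you invoke is sound: the uniqueness of the representative with all variable-degrees at most $q-1$, the fact that the reductions $y_i^{q}\mapsto y_i$ cannot increase total degree, and the identification of the reduced form, whose top monomial $y_1^{q-1}\cdots y_n^{q-1}y_{n+1}^{r}$ survives with coefficient $(-1)^n$ so that its degree really is $n(q-1)+r$. The only implicit hypothesis is $|S|\ge 1$, which follows from the covering assumption. In short, your route makes the lemma self-contained where the paper settles for a citation, at the cost of a page of standard polynomial-method machinery.
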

	
	The lower bound of Lemma~\ref{bound} clearly gives a lower bound on the size of a set of points satisfying the conditions of Theorem~\ref{main}. However, it is not obvious that this bound should be tight since the set of points in Theorem~\ref{main} does not contain the origin. 

\begin{theorem}\label{boundmain}
	
	Let $q$ be an arbitrary prime power. If $S \subseteq \F_q^n \setminus \{0\}$ is a set of points such that

\begin{enumerate}
	\item $S$ is not contained in any affine hyperplane,
	\item $S$ meets every affine hyperplane,
	\item $|S| < q^{n-2}(q-1)$,
\end{enumerate}
	then $|S| \geq n(q-1)+1$. Moreover, this lower bound is tight.
	
\end{theorem}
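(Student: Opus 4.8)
The lower bound is immediate: since $S$ meets every affine hyperplane by condition~(2), Lemma~\ref{bound} applies directly and gives $|S| \geq n(q-1)+1$. Conditions~(1) and (3) play no role in this half, so all of the work lies in establishing tightness, i.e.\ in exhibiting, for each admissible pair $(n,q)$, a set $S \subseteq \F_q^n \setminus \{0\}$ of size exactly $n(q-1)+1$ that satisfies all three conditions simultaneously.

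The natural starting point is the union of the coordinate axes together with the origin, $T := \{0\} \cup \{\lambda e_i : 1 \le i \le n,\ \lambda \in \F_q^\times\}$, which has exactly $n(q-1)+1$ points. First I would check that $T$ is an extremal affine blocking set: given an affine hyperplane $\{x : v \cdot x = c\}$ with $v \neq 0$, if $c = 0$ it is met by the origin, and if $c \neq 0$ one picks a coordinate $i$ with $v_i \neq 0$ and notes that $\lambda e_i \in T$ lies on it for $\lambda = c v_i^{-1} \in \F_q^\times$. Moreover $T$ contains $0, e_1, \dots, e_n$, so its affine span is all of $\F_q^n$ and it lies in no affine hyperplane. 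The only defect of $T$ is that it contains the origin, which the theorem forbids.

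The key observation is that both relevant properties---being an affine blocking set and affinely spanning $\F_q^n$---are invariant under translation, whereas membership of the origin is not. So I would translate $T$ by a vector $t \notin T$, for instance $t = e_1 + e_2$ (which is nonzero and not of the form $\lambda e_i$), and set $S := T + t$. Translation is a bijection, so $|S| = n(q-1)+1$; if $H$ is any affine hyperplane then so is $H - t$, and the point of $T$ on $H - t$ translates to a point of $S$ on $H$, giving condition~(2); the affine span of $S$ is that of $T$ translated, namely all of $\F_q^n$, giving condition~(1). Finally $0 \in S$ would force $-t \in T$, hence $t \in -T = T$ (using $\F_q^\times = -\F_q^\times$), contradicting the choice of $t$; thus $S \subseteq \F_q^n \setminus \{0\}$.

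It then remains to verify condition~(3), namely $n(q-1)+1 < q^{n-2}(q-1)$; this is the only place a restriction on $(n,q)$ enters, and an elementary estimate shows it holds for all $n$ beyond a small explicit threshold depending on $q$ (for example $n \geq 5$ when $q = 2$). I expect the main obstacle to be exactly the tension flagged before the statement: the cheapest affine blocking sets naturally contain the origin, and simply deleting it destroys the blocking property for the hyperplanes through the origin. The translation trick is what resolves this while keeping the cardinality at the minimum $n(q-1)+1$, so the crux of the argument is the verification that translation preserves all three conditions at once.
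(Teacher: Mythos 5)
Your proof is correct and takes essentially the same route as the paper: the paper's extremal example is $S=\{a+\lambda e_i : \lambda\in\F_q,\ 1\le i\le n\}$ with $a$ not on any coordinate axis, which is exactly your translated set $T+t$, verified in the same way. You are also right to flag that tightness requires $n(q-1)+1<q^{n-2}(q-1)$, i.e.\ $n$ sufficiently large relative to $q$; the paper asserts this inequality without comment, so your explicit caveat is a small improvement rather than a divergence.
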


\begin{proof}
	
	To show that this lower bound is tight, consider the set of points 
\begin{equation*}
	S: =\{ a+ \lambda e_i : \lambda \in \F_q, 1 \leq i \leq n \}
\end{equation*}
	where $a \in \F_q^n \setminus \{0 \}$ is any point not equal to $\lambda e_i$ for any $\lambda \in \F_q$, $1 \leq i \leq n$. By our choice of $a$, the origin is not an element of $S$. 
	Clearly $S$ is not contained in an affine hyperplane, and $|S|=n(q-1)+1 <q^{n-2}(q-1)$. 
	Lastly, if $u_1X_1+u_2X_2+...+u_nX_n=\alpha$ is the equation of an affine hyperplane, then it is easily checked that $a+\lambda e_i$ is a point on the affine hyperplane, where $i$ is chosen such that $u_i \neq 0$, and $\lambda$ is chosen to be the element $\lambda = \frac{1}{u_i}(\alpha- u \cdot a)$. 
	
\end{proof}

The sufficient conditions given in Theorem~\ref{main} and Corollary~\ref{mainbinary} give geometric conditions for the code $\mathcal C_f$ to be minimal and not satisfy the Ashikhmin-Barg condition when $q$ is any prime power and $f$ is an indicator function. Moreover, since the minimum weight of $\mathcal C_f$ is at most $|S|$, then the tight lower bound on the size of a set of points satisfying these conditions given in Theorem~\ref{boundmain} shows that it is possible for $\mathcal C_f$ to additionally have small minimum weight.

\section*{Acknowledgements}

	The author would like to thank Prof. Peter Sin for his sage advice and helpful comments in preparing this paper.

\newcommand{\Addresses}{{
  \bigskip
  \footnotesize

  Julien Sorci, \textsc{Department of Mathematics, University of Florida,
    P. O. Box 118105, Gainesville FL 32611, USA}\par\nopagebreak
  \textit{E-mail address}: \texttt{jsorci@ufl.edu}
	
}}

\maketitle

\Addresses

\end{document}